\newtheorem{propo}{Proposition}[section]
\newtheorem{corol}[propo]{Corollary}
\newtheorem{lemma}[propo]{Lemma}
\newtheorem{algor}[propo]{Algorithm}
\theoremstyle{definition}
\newtheorem{defin}[propo]{Definition}
\newtheorem{examp}[propo]{Example}
\newtheorem{oppro}[propo]{Open Problem}
\theoremstyle{remark}
\newtheorem{remar}[propo]{Remark}
\newcommand{\NN }{\mathbb{N}}
\newcommand{\CC }{\mathbb{C}}
\newcommand{\RR }{\mathbb{R}}
\newcommand{\FF }{\mathbb{F}}
\newcommand{\QQ }{\mathbb{Q}}
\newcommand{\ZZ }{\mathbb{Z}}
\newcommand{\PP }{\mathbb{P}}
\DeclareMathOperator{\Aut}{Aut}
\DeclareMathOperator{\rk}{rk}
\newcommand{\Ac }{\mathcal{A}}
\newcommand{\Bc }{\mathcal{B}}
\newcommand{\Kc }{\mathcal{K}}
\newcommand{\Vc }{\mathcal{V}}
\newcommand{\Uc}{{\mathcal U}}
\DeclareMathOperator{\PGL}{PGL}
\DeclareMathOperator{\PG}{PG}
\newcommand{\PFq }{\PG(2,q)}
\newcommand{\algo}[6]
{
\begin{algor}{{\tt #1}{\rm (#2)}}\label{#1}\end{algor}
\vspace{-6pt}\noindent{\it #3}.

{\bf Input:} #4

{\bf Output:} #5

\newcounter{#1}
\begin{list}{\textbf{\arabic{#1}.}}{\usecounter{#1}}
#6\end{list}\vspace{3pt}}
\definecolor{darkgreen}{rgb}{0.0,0.1,0.6}
\newcommand{\df}[1]{{\bf\color{darkgreen} #1}}
\title[A greedy algorithm to compute arrangements of lines in the projective plane]
{A greedy algorithm to compute arrangements of lines in the projective plane}
\author{Michael~Cuntz}
\address{Michael Cuntz, Leibniz Universit\"at Hannover,
Institut f\"ur Algebra, Zah\-lentheorie und Diskrete Mathematik,
Fakult\"at f\"ur Mathematik und Physik,
Wel\-fengarten 1,
D-30167 Hannover, Germany}
\email{cuntz@math.uni-hannover.de}
\begin{document}

\keywords{simplicial arrangement, reflection group, matroid}
\subjclass[2010]{20F55, 52C35, 14N20}

\begin{abstract}
We introduce a greedy algorithm optimizing arrangements of lines with respect to a property. We apply this algorithm to the case of simpliciality: it recovers all known simplicial arrangements of lines in a very short time and also produces a yet unknown simplicial arrangement with 35 lines. We compute a (certainly incomplete) database of combinatorially simplicial complex arrangements of hyperplanes with up to 50 lines. Surprisingly, it contains several examples whose matroids have an infinite space of realizations up to projectivities.
\end{abstract}

\maketitle

\section{Introduction}

A simplicial arrangement is a finite set of linear hyperplanes in a real vector space which decomposes its complement into open simplicial cones, cf.\ \cite{a-Melchi41}.
A classification of simplicial arrangements, even in the case of dimension three, has not been achieved in full generality yet. There is a topological result by Deligne \cite{MR0422673} and there are some classifications of smaller classes, as in \cite{p-CH09c}, \cite{p-CH10}, or \cite{CM17}. But until now, no explicit approach to a classification is known. In this early stage of investigations, it is common to collect examples as in \cite{AW86a}, \cite{p-G-09} and \cite{p-C12}.

In many areas of mathematics, examples are mainly used as counter-examples in order to demonstrate that certain propositions do not hold. When dealing with discrete structures however, one often encounters a finite set of exceptions. For instance, the discovery of some of the finite simple groups has been celebrated although each such group is ``merely'' an example. Reflection groups are a further example of a structure with sporadic cases in which the situation is less difficult (a classification of finite real reflection groups is even accessible to students).
The situation is, albeit less prominent, apparently similar in the case of simplicial arrangements (note that real reflection groups ``are'' very special simplicial arrangements). For the case of rank three it is conjectured that there is, apart from three infinite series, only a finite number of sporadic examples.
This is why a classification will ultimately probably be found via a combination of theoretical arguments and a collection of examples.

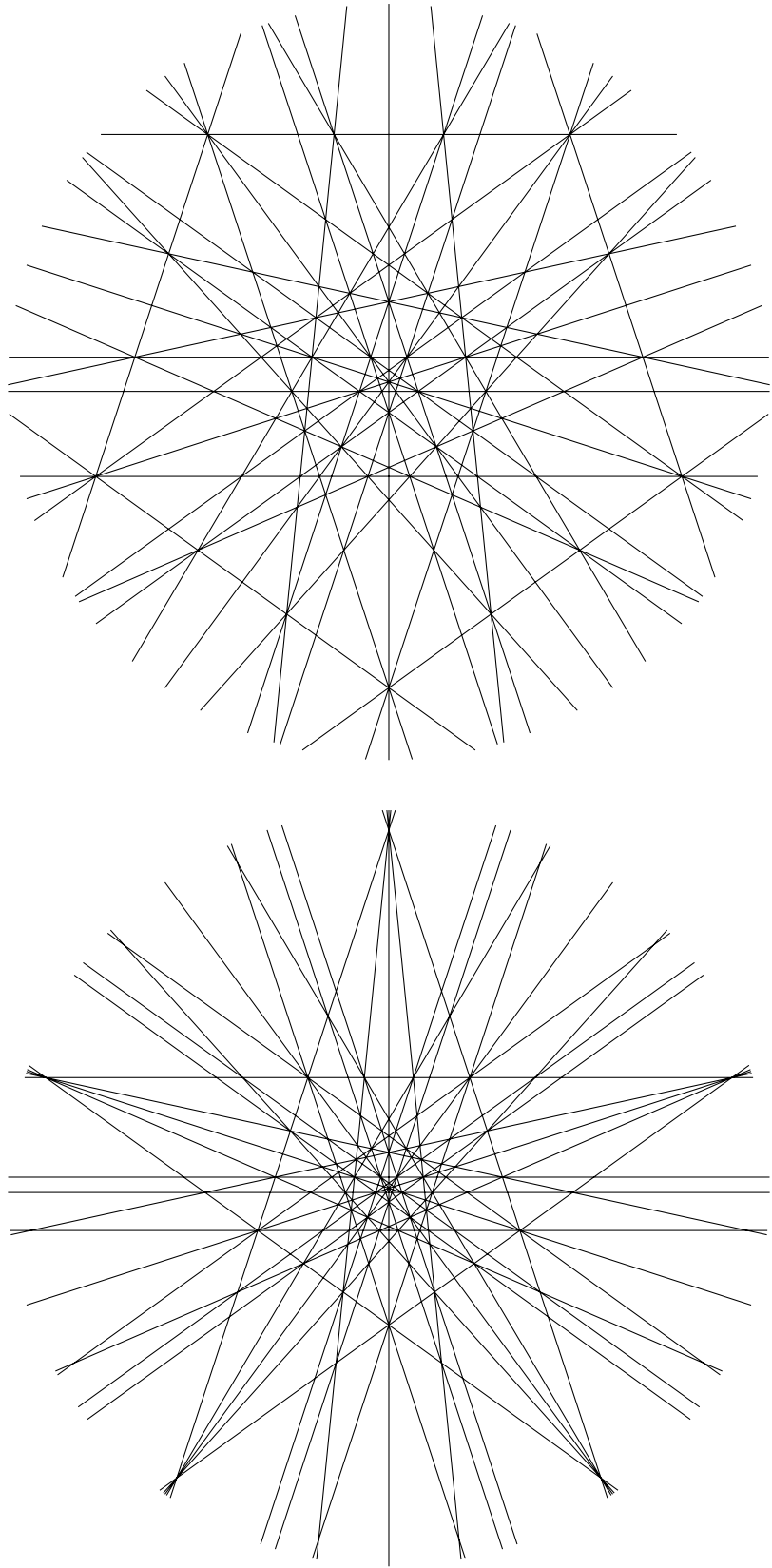
\begin{figure}
\begin{center}
\setlength{\unitlength}{0.75pt}
\begin{picture}(600,400)(0,200)
\moveto(300.000000000000000000000000000,600.000000000000000000000000000)
\lineto(300.000000000000000000000000000,200.000000000000000000000000000)
\moveto(490.211222793226967798729969454,338.196353477279565667920546203)
\lineto(109.788777206773032201270030546,461.803646522720434332079453798)
\moveto(493.649167310370844258963269989,350.000000000000000000000000000)
\lineto(106.350832689629155741036730011,350.000000000000000000000000000)
\moveto(499.274358232042334362419174016,382.978538511406797288194095982)
\lineto(254.610029286908278125665503943,205.218710963643441224147500799)
\moveto(398.968878932913366486834222682,226.203679547689109239257945313)
\lineto(139.100807178870402956358138055,518.791623229539876266538177587)
\moveto(345.389970713091721874334496057,205.218710963643441224147500799)
\lineto(100.725641767957665637580825984,382.978538511406797288194095982)
\moveto(277.913022855087308595830707670,598.776672274691489802789654241)
\lineto(239.653522778648111312643178333,209.321467681931069380481648846)
\moveto(471.221167970164474305630842449,296.639893387570835412029491694)
\lineto(377.767768142817375428610303583,584.261157703095415405043562190)
\moveto(360.346477221351888687356821667,209.321467681931069380481648846)
\lineto(322.086977144912691404169292330,598.776672274691489802789654241)
\moveto(460.899192821129597043641861945,518.791623229539876266538177587)
\lineto(201.031121067086633513165777317,226.203679547689109239257945313)
\moveto(222.232231857182624571389696417,584.261157703095415405043562190)
\lineto(128.778832029835525694369157551,296.639893387570835412029491694)
\moveto(490.211222793226967798729969454,461.803646522720434332079453798)
\lineto(109.788777206773032201270030546,338.196353477279565667920546203)
\moveto(482.222484170879662643624102652,482.431585345627744919227389245)
\lineto(100.005851715243812024145561362,398.470081095301612074296060614)
\moveto(499.994148284756187975854438639,398.470081095301612074296060614)
\lineto(117.777515829120337356375897348,482.431585345627744919227389245)
\moveto(363.256211211645214920412850329,589.733106608065979379397554340)
\lineto(165.293579024573374624903725434,252.168406123754649904854125661)
\moveto(453.753655398313702491477621582,272.094513598295559071308500372)
\lineto(130.842610790189231141880033800,506.704159598961254682587802403)
\moveto(417.556709601146281311756946962,238.196353478199504079312614697)
\lineto(182.443290398853718688243053038,561.803646521800495920687385304)
\moveto(417.556709601146281311756946962,561.803646521800495920687385304)
\lineto(182.443290398853718688243053038,238.196353478199504079312614697)
\moveto(451.210929116832651774845355991,530.901699437494742410229341718)
\lineto(148.789070883167348225154644009,530.901699437494742410229341718)
\moveto(434.706420975426625375096274565,252.168406123754649904854125661)
\lineto(236.743788788354785079587149671,589.733106608065979379397554340)
\moveto(469.157389209810768858119966200,506.704159598961254682587802403)
\lineto(146.246344601686297508522378418,272.094513598295559071308500372)
\moveto(499.570305045734574920500586581,413.103180680752626561830962120)
\lineto(100.429694954265425079499413419,413.103180680752626561830962120)
\moveto(366.543788404626644659212896707,588.605207310827652327105312872)
\lineto(242.976195492496265797777118211,208.301576116312326093823038893)
\moveto(495.873049068606738248705337535,440.419656710159934136108356336)
\lineto(137.302356105043031298512243909,283.683721384194157968493747394)
\moveto(357.023804507503734202222881789,208.301576116312326093823038893)
\lineto(233.456211595373355340787103293,588.605207310827652327105312872)
\moveto(462.697643894956968701487756091,283.683721384194157968493747394)
\lineto(104.126950931393261751294662465,440.419656710159934136108356336)
\moveto(427.275940655054453355676367877,554.275192206560082985269812989)
\lineto(113.945453650666516186871816537,326.626259583256092456301078668)
\moveto(374.132190489614720405654246783,214.246350417518103611588834661)
\lineto(250.791450925404741463669320393,593.851795704793898421202948099)
\moveto(349.208549074595258536330679607,593.851795704793898421202948099)
\lineto(225.867809510385279594345753217,214.246350417518103611588834661)
\moveto(499.937365889193711502325840010,394.995030340683481761242421658)
\lineto(100.062634110806288497674159990,394.995030340683481761242421658)
\moveto(486.054546349333483813128183463,326.626259583256092456301078668)
\lineto(172.724059344945546644323632123,554.275192206560082985269812989)
\moveto(312.288004461537720802280578186,200.377844550377552366159024401)
\lineto(192.606684765979341077414164682,568.720703658579690813478532008)
\moveto(458.810628018575591451615476229,521.569669031159303028451424212)
\lineto(135.305669219088494986975271940,286.528517200894456737667115587)
\moveto(464.694330780911505013024728060,286.528517200894456737667115587)
\lineto(141.189371981424408548384523771,521.569669031159303028451424212)
\moveto(407.393315234020658922585835318,568.720703658579690813478532008)
\lineto(287.711995538462279197719421814,200.377844550377552366159024401)
\strokepath
\end{picture}
\medskip

\begin{picture}(600,400)(0,200)
\moveto(300.000000000000000000000000000,600.000000000000000000000000000)
\lineto(300.000000000000000000000000000,200.000000000000000000000000000)
\moveto(490.211222793226967798729969454,338.196353477279565667920546203)
\lineto(109.788777206773032201270030546,461.803646522720434332079453797)
\moveto(498.746069143517905200435966577,377.639320225002103035908263313)
\lineto(101.253930856482094799564033423,377.639320225002103035908263313)
\moveto(489.126237694575395281647151622,465.048183798588634174533758617)
\lineto(179.692952539083360028222928919,240.230746602367979109499556115)
\moveto(418.394000371230172483882828924,238.808000582854144621459012110)
\lineto(153.909921284379666800828598457,536.593151002763879050473110876)
\moveto(420.307047460916639971777071081,240.230746602367979109499556115)
\lineto(110.873762305424604718352848378,465.048183798588634174533758617)
\moveto(301.036867533219357422017118763,599.997312246236338276637921351)
\lineto(262.097770950161753021245257506,203.624286040626709838542003172)
\moveto(414.771984881876253742183930756,236.209305861796993685616675879)
\lineto(296.579252476919680850238654011,599.970744076185654076482179016)
\moveto(337.902229049838246978754742494,203.624286040626709838542003172)
\lineto(298.963132466780642577982881237,599.997312246236338276637921351)
\moveto(446.090078715620333199171401543,536.593151002763879050473110876)
\lineto(181.605999628769827516117171076,238.808000582854144621459012110)
\moveto(303.420747523080319149761345990,599.970744076185654076482179016)
\lineto(185.228015118123746257816069244,236.209305861796993685616675879)
\moveto(490.211222793226967798729969454,461.803646522720434332079453797)
\lineto(109.788777206773032201270030546,338.196353477279565667920546203)
\moveto(490.529102832561270924157216790,460.816617579566957437200944278)
\lineto(101.523167361054434227924844156,375.363707551419345883184475639)
\moveto(498.476832638945565772075155844,375.363707551419345883184475639)
\lineto(109.470897167438729075842783210,460.816617579566957437200944278)
\moveto(384.762754489706792951253177327,581.149870138837502049544365776)
\lineto(183.283740302606322050382282271,237.589056026847266426020708141)
\moveto(458.289305626776213588845524687,277.752318123438299169980430930)
\lineto(134.821935191259912015732769274,512.766160287737292988804766655)
\moveto(417.556709601146281311756946962,238.196353478199504079312614697)
\lineto(182.443290398853718688243053038,561.803646521800495920687385304)
\moveto(417.556709601146281311756946962,561.803646521800495920687385304)
\lineto(182.443290398853718688243053038,238.196353478199504079312614697)
\moveto(491.240552751959901167429768726,458.541019662496845446137605031)
\lineto(108.759447248040098832570231274,458.541019662496845446137605031)
\moveto(416.716259697393677949617717729,237.589056026847266426020708141)
\lineto(215.237245510293207048746822673,581.149870138837502049544365776)
\moveto(465.178064808740087984267230726,512.766160287737292988804766655)
\lineto(141.710694373223786411154475313,277.752318123438299169980430930)
\moveto(499.914134895983556180183984515,405.859920544724968641595545935)
\lineto(100.085865104016443819816015485,405.859920544724968641595545935)
\moveto(363.928013872447184450395303815,589.507807338706233947917142417)
\lineto(240.329452355286854023140114105,209.108864156084978186409133419)
\moveto(489.888229691498476497755199765,462.789013566297698328392732910)
\lineto(124.948262538377343417835354455,303.268985264977525718268205141)
\moveto(359.670547644713145976859885895,209.108864156084978186409133419)
\lineto(236.071986127552815549604696185,589.507807338706233947917142417)
\moveto(475.051737461622656582164645545,303.268985264977525718268205141)
\lineto(110.111770308501523502244800235,462.789013566297698328392732910)
\moveto(447.645408726738306622056275003,534.910463945219563872989631338)
\lineto(126.067999682248866657180943165,301.269765190870635457131194954)
\moveto(367.349713415765591576045431485,211.681078744555664626913911066)
\lineto(243.796477874396921362752702853,591.940522299687527961132499296)
\moveto(356.203522125603078637247297147,591.940522299687527961132499296)
\lineto(232.650286584234408423954568515,211.681078744555664626913911066)
\moveto(499.987474747149763141761595232,397.761709523288860318522162157)
\lineto(100.012525252850236858238404768,397.761709523288860318522162157)
\moveto(473.932000317751133342819056834,301.269765190870635457131194954)
\lineto(152.354591273261693377943724997,534.910463945219563872989631338)
\moveto(340.149475960525256648410753323,204.071391624155089238464013038)
\lineto(217.318136057771766330075942543,582.109059014204064526924231562)
\moveto(460.477382197636757919721127788,519.360838649004209169024194035)
\lineto(136.891345935165963670467485463,284.260780332860809469746160975)
\moveto(463.108654064834036329532514537,284.260780332860809469746160975)
\lineto(139.522617802363242080278872212,519.360838649004209169024194035)
\moveto(382.681863942228233669924057457,582.109059014204064526924231562)
\lineto(259.850524039474743351589246677,204.071391624155089238464013038)
\strokepath
\end{picture}
\end{center}
\caption{The ``new'' simplicial arrangement of rank three with $35$ hyperplanes (from two different perspectives).\label{sim35}}
\end{figure}

In this paper, we introduce an algorithm to approximate arrangements of lines with respect to a given property. This algorithm works surprisingly well in the case of simpliciality, since its implementation finds all known simplicial arrangements (with up to $50$ lines) in a few minutes on an ordinary computer. Using a computer cluster we even find a yet unknown simplicial arrangement of lines with $35$ lines (Fig.\ \ref{sim35}).
Moreover, this algorithm may also be used to attack other open problems as for example Terao's conjecture, where one difficulty is to find matroids with an infinite moduli space of realizations. Again, our implementation finds all known prominent examples (free but not recursively free or with similar properties) within a short time.

The algorithm is based on the following intuition. Simplicial arrangements of lines, like most ``interesting'' arrangements (as for example those considered in the context of Terao's conjecture), have few double points. In fact, it even turns out that simpliciality is closely related to the property to have few double points, as demonstrated for example in \cite{p-GT-13}, where it is shown that asymptotically, an arrangement of lines with few double points is near to be a simplicial arrangement belonging to one of the infinite series. Moreover, it is easy to associate an invariant in $\ZZ$ to each matroid of rank three which is zero if and only if any realization of the matroid is simplicial and which quantifies how ``far'' it is from being simplicial.

Now the key idea in the algorithm is: take an arrangement of lines, remove a line, and replace this line by a line through two intersection points of the arrangement. This will often reduce the number of double points. If the new arrangement ``improves'' the chosen invariant, then discard the old arrangement and repeat the procedure with the new one until the invariant is zero.
This idea alone is not sufficient to obtain all the desired examples. In this primitive version, an arrangement will tend to become rational during the procedure (see Remark \ref{remQ} for an explanation). It is thus important to include algebraic numbers or possibly transcendents if requested (see Remark \ref{remtrans}).
Further technical improvements which are necessary to produce a working implementation are discussed in Section \ref{sectec}.

As a result we present several yet unknown arrangements of lines:
\begin{itemize}
\item We find a ``new'' real simplicial arrangement of rank three with $35$ lines.
\item We collect a database with $1318$ combinatorially simplicial arrangements of lines with up to $50$ lines over $\CC$.
\item This database includes several matroids of rank three which are combinatorially simplicial and have infinite moduli space in characteristic zero.
\end{itemize}

In Sections \ref{sec:simp} and \ref{matmod} we recall all required notions on simplicial arrangements and moduli spaces of matroids of rank three, including some (maybe new) open problems.
Section \ref{sectec} is devoted to the description of the algorithms, results of our implementations are collected in Section \ref{sec:resu}.

\medskip
\noindent{\bf Acknowledgement:}
{The computations required for the results of this paper were performed on a computer cluster funded by the DFG, project number 411116428.}

\section{Simplicial arrangements}\label{sec:simp}

The main application of our algorithm is to produce simplicial arrangements, so let us recall the basic notions in this section.

\begin{defin}\label{A_R}
Let $K$ be a field, $r\in\NN$, and $V:=K^r$.
An \df{arrangement of hyperplanes} $(\Ac,V)$ (or $\Ac$ for short) is a finite set of hyperplanes $\Ac$ in $V$.
It is \df{central} if all elements of $\Ac$ are linear subspaces and \df{essential} if $\bigcap_{H\in\Ac} H=0$.
\end{defin}

\begin{defin}\label{A_R_2}
Let $r\in\NN$, $V:=\RR^r$, and $\Ac$ an arrangement in $V$.
Let $\Kc(\Ac)$ be the set of connected components (\df{chambers}) of $V\backslash \bigcup_{H\in\Ac} H$.
If every chamber $K$ is an \df{open simplicial cone}, i.e.\ there exist
$\alpha^\vee_1,\ldots,\alpha^\vee_r \in V$ such that
\begin{equation*}
K = \Big\{ \sum_{i=1}^r a_i\alpha^\vee_i \mid a_i> 0 \quad\mbox{for all}\quad
i=1,\ldots,r \Big\} =: \langle\alpha^\vee_1,\ldots,\alpha^\vee_r\rangle_{>0},
\end{equation*}
then $\Ac$ is called a \df{simplicial arrangement}.
\end{defin}

\begin{examp}
\begin{enumerate}
\item Figure \ref{sim35} displays an example for $r=3$, displayed in the real projective plane. Simpliciality of the chambers translates to the fact that all regions are triangles in the picture.
\item Let $W$ be a real reflection group, $R\subseteq V^*$ the set of roots of $W$.
Then $\Ac = \{\ker \alpha \mid \alpha\in R\}$ is a simplicial arrangement.
\end{enumerate}
\end{examp}

Since a hyperplane is uniquely determined by a linear form up to scalars, i.e.\ a one dimensional subspace of the dual space, it is often easier to work in the projective space instead of $V$.
We will mostly concentrate on the case of rank three, thus we are working with lines in the projective plane.
We denote $\PP_2 K$ the projective plane over $K$ and $\PFq=\PP_2\FF_q$. Moreover, we will sometimes denote both projective lines and points with coordinates $(a:b:c)$ since points and lines are dual to each other in the plane. So it makes sence to write $(a:b:c)$, $a,b,c\in K$ for a hyperplane in an arrangement of rank three over $K$.

\begin{defin}
Let $\Ac$ be an arrangement.
For $X\le V$, we define the \df{localization}
$$
\Ac_X := \{ H \in \Ac \mid X \subseteq H \}
$$ 
of $\Ac$ at $X$, and the \df{restriction} $(\Ac^X,X)$ of $\Ac$ to $X$, where 
$$
\Ac^X := \{ X\cap H \mid H \in \Ac \setminus \Ac_X \}.
$$
\end{defin}

\begin{defin}
The \emph{intersection lattice} $L(\Ac)$ of $\Ac$ consists of all intersections of elements of $\Ac$
including $V$ as the empty intersection. 
The \emph{rank} $\rk(\Ac)$ of $\Ac$ is defined as the codimension of the intersection of all hyperplanes in $\Ac$.
For $0 \leq k \leq r$ we write $L_k(\Ac) := \{ X \in L(\Ac) \mid r(X) = k \}$.
\end{defin}

\begin{remar}
If $\Ac$ is simplicial, then all localizations and restrictions to elements of its intersection lattice are simplicial.
\end{remar}

\begin{propo}[e.g.\ {\cite[2]{p-CG-13}}]
Let $\Ac$ be a central essential arrangement of hyperplanes in $\RR^r$, $r\ge 2$. Then $\Ac$ is simplicial if and only if
\begin{equation}\label{combsim0}
r |\Kc(\Ac)| = 2 \sum_{H\in\Ac} |\Kc(\Ac^H)|.
\end{equation}
\end{propo}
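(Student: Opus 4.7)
The plan is a standard double-counting of incidences between chambers and their walls (codimension-one faces). Let $\mathcal{I}=\{(K,H)\in\Kc(\Ac)\times\Ac : H \text{ is a wall of }K\}$, where $H$ is called a wall of $K$ when $\overline{K}\cap H$ has dimension $r-1$. I will count $|\mathcal{I}|$ in two ways and exploit the fact that, among all pointed $r$-dimensional polyhedral cones with apex at the origin, the simplicial ones are exactly those with the minimum possible number of walls, namely $r$.

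First, fix $H\in\Ac$ and look at the fiber of $\mathcal{I}$ over $H$. The walls of chambers of $\Ac$ that lie on $H$ are precisely the chambers of the restriction $\Ac^H$ (i.e.\ the connected components of $H\setminus\bigcup_{H'\in\Ac\setminus\{H\}}(H\cap H')$). Every chamber $C$ of $\Ac^H$ is the common boundary face of exactly two chambers of $\Ac$, one on each side of $H$ (this uses that $\Ac$ is central, so both sides of $H$ are non-empty near $C$). Hence
\begin{equation*}
|\mathcal{I}| \;=\; \sum_{H\in\Ac} 2\,|\Kc(\Ac^H)|.
\end{equation*}

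Second, fix $K\in\Kc(\Ac)$ and look at the fiber of $\mathcal{I}$ over $K$. Here I use that $\Ac$ is central and essential: $K$ is an open convex pointed polyhedral cone of full dimension $r$ in $\RR^r$ (pointedness follows from essentiality, since $\bigcap_{H\in\Ac}H=0$ forces each chamber to be contained in an open halfspace). A classical fact about pointed polyhedral cones is that such a cone has at least $r$ facets, with equality exactly when it is a simplicial cone of the form $\langle\alpha_1^\vee,\dots,\alpha_r^\vee\rangle_{>0}$. Therefore
\begin{equation*}
r\,|\Kc(\Ac)| \;\le\; \sum_{K\in\Kc(\Ac)} |\{H\in\Ac : H \text{ wall of } K\}| \;=\; |\mathcal{I}|,
\end{equation*}
with equality if and only if every chamber $K$ is a simplicial cone, i.e.\ $\Ac$ is simplicial.

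Combining the two counts yields $r\,|\Kc(\Ac)|\le 2\sum_{H\in\Ac}|\Kc(\Ac^H)|$, with equality iff $\Ac$ is simplicial, which is the claim. The only slightly delicate point is the lower bound $r$ on the number of walls of a chamber: I would spell out that a pointed full-dimensional cone in $\RR^r$ cannot be cut out by fewer than $r$ closed halfspaces (otherwise it would contain a line), and that exactly $r$ walls forces linear independence of the inward normals, making the cone simplicial. Everything else is bookkeeping.
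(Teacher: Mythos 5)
Your double-counting argument is correct, and it is essentially the standard proof: the paper itself states this proposition without proof, citing \cite{p-CG-13}, where the same wall--chamber incidence count (each chamber of $\Ac^H$ is a facet of exactly two chambers, each pointed full-dimensional cone has at least $r$ walls with equality exactly in the simplicial case) is the underlying argument. One tiny remark: the two-sidedness of a chamber of $\Ac^H$ does not need centrality (it follows from convexity of chambers); where centrality and essentiality really enter is in guaranteeing that each chamber is a pointed cone, which you do use correctly for the lower bound $r$ on the number of walls.
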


\begin{remar}
By Zaslavsky's theorem, $|\Kc(\Ac)| = (-1)^r\chi_\Ac(-1)$ which depends only on the intersection lattice of $\Ac$.
Thus simpliciality is a purely combinatorial property.
\end{remar}
In this article the following equivalent formulation is more convenient \footnote{Notice the constant $3$ which comes from the Euler characteristic of the sphere.}.

\begin{corol}\label{corcombsim}
Let $\Ac$ be a central essential arrangement in $V=K^3$.
Then $\Ac$ is simplicial if and only if $\sigma(\Ac)=0$, where
\begin{equation}\label{combsim}
\sigma(\Ac) = \sigma(L(\Ac)) := 3 + \sum_{v\in L_2(\Ac)} (|\Ac_v|-3) \in \ZZ.
\end{equation}
\end{corol}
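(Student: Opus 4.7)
The plan is to unfold the criterion \eqref{combsim0} of the preceding proposition in the case $r=3$ and rewrite both sides purely in terms of the two lattice invariants $|L_2(\Ac)|$ and $\sum_{v\in L_2(\Ac)}|\Ac_v|$ that appear in $\sigma(\Ac)$. It suffices to treat $K=\RR$, since the remark preceding the corollary observes that simpliciality is a combinatorial property.

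For the right-hand side, I would first observe that for each $H\in\Ac$ the restriction $\Ac^H$ is a central essential arrangement of lines in the two-dimensional space $H$, and that two hyperplanes $H',H''\in\Ac\setminus\{H\}$ contribute the same member of $\Ac^H$ precisely when $H\cap H'=H\cap H''$ is a common rank-two flat of $\Ac$. Hence $|\Ac^H|$ equals the number of $v\in L_2(\Ac)$ with $H\in\Ac_v$. Any central arrangement of $m$ lines in $\RR^2$ partitions the plane into $2m$ simplicial cones, so $|\Kc(\Ac^H)|=2|\Ac^H|$. Summing over $H$ and exchanging the order of summation (by double-counting flags $(v,H)$ with $H\in\Ac_v$) yields $\sum_{H\in\Ac}|\Kc(\Ac^H)|=2\sum_{v\in L_2(\Ac)}|\Ac_v|$.

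For the left-hand side, I would pass to the projective plane $\PP(V)$, whose induced line arrangement has $|L_2(\Ac)|$ vertices and, by the same double count, $\sum_{v\in L_2(\Ac)}|\Ac_v|$ edges. Applying the Euler relation on $\PP^2(\RR)$ (Euler characteristic $1$) then gives $1+\sum_{v\in L_2(\Ac)}(|\Ac_v|-1)$ faces, and chambers of $\Ac$ in $V$ come in antipodal pairs above faces in $\PP(V)$, so $|\Kc(\Ac)|=2+2\sum_{v\in L_2(\Ac)}(|\Ac_v|-1)$.

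Substituting both expressions into \eqref{combsim0} with $r=3$, simpliciality becomes $3\bigl(2+2\sum_{v}(|\Ac_v|-1)\bigr)=4\sum_v|\Ac_v|$, which rearranges trivially to $3+\sum_{v\in L_2(\Ac)}(|\Ac_v|-3)=0$, i.e.\ $\sigma(\Ac)=0$. The whole derivation is essentially bookkeeping; the only non-trivial combinatorial ingredients are the identity $|\Kc(\Ac^H)|=2|\Ac^H|$ for central plane arrangements and Euler's formula on $\PP^2(\RR)$, and I foresee no real obstacle.
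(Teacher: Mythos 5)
Your derivation is correct and is essentially the route the paper intends: the corollary is presented as a direct reformulation of the chamber-counting criterion \eqref{combsim0}, with the footnote attributing the constant $3$ to the Euler characteristic of the sphere. You perform the same Euler-characteristic bookkeeping, merely on $\PP_2\RR$ (with $\chi=1$) instead of on $S^2$ (with $\chi=2$), which differs only by the antipodal factor of $2$ that you already account for when passing between faces and chambers.
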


It is interesting to extend the definition of simpliciality to arbitrary arrangements although the original motivation using chambers is lost.

\begin{defin}
Let $K$ be a field and $\Ac$ an arrangement of hyperplanes in $K^3$. Then $\Ac$ is (\df{combinatorially}) \df{simplicial} if and only if $\sigma(L(\Ac))=0$.
\end{defin}

For example, simplicial arrangements over $\CC$ have many other nice properties and seem to be rare like real simplicial arrangements.

\section{Matroids and moduli spaces}
\label{matmod}

\begin{defin}[c.f.\ \cite{p-ACKN-14}]\label{modulispace} 
Let $K$ be a field and $\Ac=\{H_1,\dotsc,H_\ell\}$ be 
a central arrangement in $K^n$ ordered by the indices of the hyperplanes.
To a matrix $M=[m_1,\dotsc,m_\ell]\in K^{n\times\ell}$, 
we attach a central arrangement 
$\Bc_M=\{H_i'=\ker(m_i)\mid 1\leq i\leq\ell\}\setminus\{K^n\}$ 
in $K^n$. 
Consider the following condition for $M$: 
\begin{eqnarray*}
(\ast) && |\Bc_M|=\ell \text{ and there exists an isomorphism } \pi\colon L(\Ac)\rightarrow L(\Bc_M)\\
&& \text{ of graded lattices such that }
\pi(H_i)=H_i' \text{ for } 1\leq i\leq\ell.
\end{eqnarray*} 
For a lattice $L$ on $\{1,\ldots,\ell\}$, Yuzvinsky \cite{p-sY-93} analyzes the following space: 
$$\Uc(L)=\{M\in K^{n\times \ell} \mid M \text{\ satisfies\ }(\ast)\}.$$
Since the condition ($\ast$) is determined in terms of vanishing or 
non-vanishing of minors of $M$, it follows that $\Uc(L)$ is an algebraic variety.
We define the \df{moduli space} $\Vc_K(L)$ of arrangements whose intersection lattice is $L$ as 
\[ \Vc_K(L) := \PGL(n,K) \backslash (\Uc(L) / (K^\times)^\ell). \] 
For a lattice $L$, 
we write $\Aut(L)$ for the set of automorphisms of posets, i.e., the set of bijections preserving the relations in the poset. 
\end{defin} 

The intersection lattices of arrangements of lines with few double intersection points (except the pencil and near-pencil) have mostly a very small moduli space.

\begin{examp}
Let $\Ac$ be the reflection arrangement of an irreducible complex reflection group of rank three. Then the moduli space $\Vc_\CC(L(\Ac))$ is finite and these finitely many realizations of $L(\Ac)$ in $\Vc_\CC(L(\Ac))$ are Galois 
conjugate under automorphisms of the smallest field extension of $\QQ$ over which $L(\Ac)$ is realizable:
\begin{enumerate}
\item Let $\Ac$ be the reflection arrangement of type $B_3$. Then $\Vc_\CC(L(\Ac))$ consists of one point since any realization of $L(\Ac)$ over $\CC$ is the same up to projectivities.
\item Let $\Ac$ be the reflection arrangement of type $H_3$. Then $\Vc_\CC(L(\Ac))$ consists of two points
since there are two realizations of $L(\Ac)$ over $\CC$ up to projectivities; these two points are Galois conjugate under the automorphism $\sqrt{5}\mapsto -\sqrt{5}$.
\end{enumerate}
\end{examp}

\begin{oppro}
Is it true that the moduli space of an irreducible simplicial arrangement over the real numbers is always finite?
\end{oppro}

We will see in the last section that our algorithm produces examples of irreducible simplicial arrangements over $\CC$ with infinite moduli space.

\begin{defin}\label{genM}
Let $L$ be a matroid of rank three, for instance the intersection lattice of an arrangement of rank three.
Let us call the one-dimensional elements \df{points} and the two-dimensional elements (hyperplanes) \df{lines}.
We say that the lines $H_1,\ldots,H_n$ of $L$ \df{generate} $L$ if there is a sequence of points and lines
$U_1,\ldots,U_m$ of $L$ such that:
\begin{enumerate}
\item $U_i=H_i$ for $i=1,\ldots,n$.
\item For all $i>n$ there exist $j,k<i$ such that $U_i \in \{ U_j\cap U_k, U_j+U_k\}$.
\item Every line of $L$ is in $\{U_1,\ldots,U_m\}$.
\end{enumerate}
We write
\[ g(L) := \min \{ n\in\NN \mid L \text{ is generated by } n \text{ lines}\}. \]
\end{defin}

In other words, an intersection lattice is generated by lines $H_1,\ldots,H_n$ if all the lines in $L$ are obtained by inductively adding intersection points of two lines or lines through two points.

\begin{examp}
Let $\Bc$ be the reflection arrangement of type $B_3$. Then $g(L(\Bc))=4$.
Let $\Ac$ be the reflection arrangement of type $H_3$. Then $g(L(\Ac))=5$.
\end{examp}

\begin{lemma}
Let $L$ be a matroid of rank three generated by at most $4$ lines in general position.
Then $|\Vc_K(L)|\le 1$ for any field $K$.
\end{lemma}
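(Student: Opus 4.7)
The plan is to invoke the classical fundamental theorem of projective geometry: $\PGL(3,K)$ acts transitively on ordered tuples of up to four lines in general position in $\PP^2_K$, and simply transitively on four such lines (dualise the statement that $\PGL(3,K)$ acts simply transitively on projective frames of four points). Given two realizations $\Ac=(A_1,\ldots,A_\ell)$ and $\Bc=(B_1,\ldots,B_\ell)$ in $\Uc(L)$, condition~$(\ast)$ of Definition~\ref{modulispace} forces $A_i$ and $B_i$ to represent the same element of $L$ for every $i$. Let $H_1,\ldots,H_n$ with $n\le 4$ be the generating lines of $L$ in general position, indexed by $i_1,\ldots,i_n$; then both $(A_{i_1},\ldots,A_{i_n})$ and $(B_{i_1},\ldots,B_{i_n})$ are ordered tuples of lines in general position in $\PP^2_K$.

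First I would apply the transitivity statement to produce $\varphi\in\PGL(3,K)$ with $\varphi(A_{i_j})=B_{i_j}$ for $j=1,\ldots,n$. For $n=4$ this $\varphi$ is unique; for $n\le 3$ mere existence is all that is needed.

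Next I would induct along the generating sequence $U_1,\ldots,U_m$ from Definition~\ref{genM}, writing $U_i^\Ac$ and $U_i^\Bc$ for the elements of $L(\Ac)$ and $L(\Bc)$ corresponding to $U_i$ via the lattice isomorphisms of condition~$(\ast)$. The base case $i\le n$ is handled by the construction of $\varphi$. For the inductive step, suppose $U_i=U_j\cap U_k$ with $j,k<i$ (the case $U_i=U_j+U_k$ is analogous). Lattice isomorphisms preserve meets, so $U_i^\Ac=U_j^\Ac\cap U_k^\Ac$ and $U_i^\Bc=U_j^\Bc\cap U_k^\Bc$; projectivities preserve intersections of subspaces, so $\varphi(U_i^\Ac)=\varphi(U_j^\Ac)\cap\varphi(U_k^\Ac)=U_j^\Bc\cap U_k^\Bc=U_i^\Bc$ by the induction hypothesis. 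Condition~(3) of Definition~\ref{genM} guarantees that every line of $L$ occurs among the $U_i$, so $\varphi$ maps $\Ac$ bijectively to $\Bc$ preserving the ordering, which means $[\Ac]=[\Bc]$ in $\Vc_K(L)$.

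The only substantive input is the transitivity claim, which is textbook; the rest is a mechanical induction. I expect no serious obstacle---the main bookkeeping point is to ensure that the graded lattice isomorphisms of $(\ast)$ really transport the entire sequence $U_1,\ldots,U_m$ from $L$ to $L(\Ac)$ and $L(\Bc)$ compatibly, which is automatic because those isomorphisms preserve meets and joins. The degenerate cases $n\le 3$ contain very few lines and the claim is essentially immediate.
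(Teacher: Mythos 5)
Your argument is correct and is essentially the paper's own proof in expanded form: the paper likewise observes that a realization is uniquely determined by the images of the generating lines (your induction along $U_1,\ldots,U_m$) and that any two choices of at most four lines in general position are equivalent under $\PGL(3,K)$ (your transitivity step). The extra bookkeeping you supply about transporting the generating sequence through the lattice isomorphisms of $(\ast)$ is sound and merely makes explicit what the paper leaves implicit.
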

\begin{proof}
Let $H_1,\ldots,H_n$, $n\le 4$ be lines generating $L$.
Since these lines generate $L$, any realization of $L$ is uniquely determined by a choice of $n$ hyperplanes and by the matroid structure $L$. But all realizations are equivalent up to projectivities because $H_1,\ldots,H_n$ are in general position.
\end{proof}

\begin{corol}\label{mat4Q} Let $L$ be a matroid of rank three. If $g(L)\le 4$ and $L$ is realizable over $\CC$, then $L$ is realizable over $\QQ$.
\end{corol}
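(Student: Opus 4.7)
The plan is to construct a $\QQ$-realization directly by running the construction sequence of Definition \ref{genM} with rational inputs. If $H_1,\ldots,H_n$ with $n\le 4$ generate $L$ via a sequence $U_1,\ldots,U_m$, each $U_i$ for $i>n$ is either an intersection $U_j\cap U_k$ of two lines or a join $U_j+U_k$ of two points; both operations are given by polynomial formulas (essentially cross products of coordinate vectors) that preserve rationality. Thus, as soon as the generators themselves can be realized over $\QQ$ in the correct incidence pattern, an inductive application of these operations yields rational coordinates for every line and point of $L$.

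The task therefore reduces to realizing the generators rationally. The main case is that no three of $H_1,\ldots,H_n$ are concurrent. Then in any $\CC$-realization the $n$-tuple of generators sits in $\PP_2(\CC)$ in general position, and since $\PGL(3,\CC)$ acts transitively on such tuples for $n\le 4$, I can apply a complex projective transformation carrying them to a fixed rational configuration such as $\{x=0\},\{y=0\},\{z=0\},\{x+y+z=0\}$, with obvious adjustments for $n<4$. The transformed arrangement is $\PGL(3,\CC)$-conjugate to the original and so realizes the same matroid $L$, while by the rationality of each step of the construction sequence all coordinates now lie in $\QQ$. The preceding lemma appears as a consistency check here: it confirms that what has been built is the unique realization of $L$ up to projectivities, so no further verification of the matroid type is needed.

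If instead three of $H_1,\ldots,H_n$ are forced to be concurrent by the incidences of $L$, a short combinatorial inspection of the construction sequence shows it yields no new lines beyond the generators: any join of two intersection points either recovers the common concurrency line or lies on the unique transversal, and analogous statements hold when fewer than four generators are present. Consequently $L$ contains at most four lines and is either a pencil or a near-pencil, both manifestly realizable over $\QQ$. The main subtlety of the argument lies precisely in this final exhaustiveness check, since the definition of generation permits arbitrarily long sequences $U_1,\ldots,U_m$ and one must verify that no new lines can emerge from any degenerate configuration of $\le 4$ generators with three of them concurrent.
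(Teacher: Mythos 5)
Your proof is correct and follows essentially the same route as the paper: Corollary \ref{mat4Q} there rests on the preceding lemma, whose proof likewise uses that any realization is determined by the $\le 4$ generators, which (being in general position) can be moved by a projectivity to a rational frame, after which the remaining lines are forced by the rational meet/join construction. Your explicit treatment of the degenerate case, where three of the generators are concurrent and the closure therefore yields no new lines, so that $L$ is a pencil or near-pencil realizable over $\QQ$, fills in a case the paper's lemma excludes by its general-position hypothesis and the corollary passes over silently.
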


\begin{oppro}
What is the relation between $g(L)$ and $|\Vc_K(L)|$ for arbitrary matroids $L$ of rank three?
\end{oppro}

For most experiments with arrangements of hyperplanes, it is useful to have an algorithm which computes realizations of matroids. Although it is known that this is a difficult problem (see for example \cite[8]{BLVSWZ}), in practice (and in rank three), using generating sets of lines one can compute the moduli space for sufficiently large matroids (compare \cite{p-C10b} or \cite{p-C12}):

\algo{ModuliSpace}{$L$, $K$}
{Compute the moduli space of a matroid of rank three}
{a matroid $L$ of rank three, a field $K$.}
{A pair of algebraic varieties $V$, $E$ such that $\Vc_K(L)\cong V\backslash E$.}
{\label{MS}
\item Choose a set of generating lines $G:=\{H_1,\ldots,H_n\}$ of $L$.
\item For a largest subset $S:=\{H_{i_1},\ldots,H_{i_k}\}$ of $G$ in general position, choose basis elements of $K^3$ as coordinate vectors. For the remaining $d=|G\backslash S|$ lines in $G\backslash S$, choose coordinate vectors consisting of variables in a polynomial ring $K[X_1,\ldots,X_{3d}]$.
\item Every triple of lines in $L$ gives a conditions on the determinant of the corresponding coordinate vectors, yielding varieties $V$ and $E$ as required depending on whether the determinant has to be zero or not.
}

\begin{remar}
There are many possible technical improvements to Algorithm {\tt ModuliSpace} but they are not relevant for the goals of this article.
\end{remar}

\section{Greedy algorithms}
\label{sectec}

\subsection{The prototype}
We begin with a naive version of a greedy algorithm to compute arrangements with a given property $P$. In practice, $P$ will be a map assigning a number to each matroid $L(\Ac)$ in such a way that $P$ is satisfied if and only if this number is $0$. For example, $\Ac$ is simplicial if and only if $\sigma(\Ac)=0$.

The very first step is to find an arrangement over a finite field:

\algo{GreedyArrFiniteField}{$P$, $n$, $q$}
{Greedy search for arrangements of hyperplanes over a finite field}
{$n\in\NN$, a field $\FF_q$, a property $P$.}
{An arrangement with $n$ lines in $\PFq$ with $P$ if the algorithm terminates.}
{\label{GA1}
\item Choose a random set of lines $\Ac \subseteq \PFq$, $|\Ac|=n$.
\item While $\Ac$ does not satisfy $P$:
\begin{itemize}
\item Choose two random points $p_1\ne p_2$ in $L_2(\Ac)$, such that the line $\ell$ through $p_1$ and $p_2$ is not in $\Ac$.
\item Let $\ell'\ne \ell$ be a random line of $\Ac$ and $\Ac':=(\Ac\cup \{\ell\}) \backslash \{\ell'\}$.
\item If $\Ac'$ ``is closer to satisfying $P$'' than $\Ac$, then $\Ac\leftarrow \Ac'$.
\end{itemize}
\item Print $\Ac$.
}

\begin{remar}
There are several reasons why finite fields are useful in this context.
\begin{enumerate}
\item The most important reason is that the above algorithm may produce matroids requiring interesting relations with the ``same'' algebraic equations in realizations over any other fields. For example, if one chooses $\FF_{241}$, then the algorithm will automatically consider matroids enforcing certain $n$-th roots of unity for $n$ a divisor of $240$.
\item The second reason is of technical nature: an implementation is much faster over a finite field than over some rational number field.
\item A third reason is that the random set of lines at the beginning is ``less random'' if the underlying field has few elements. With a small field, the probability of finding an interesting intersection lattice by chance is higher.
\end{enumerate}
\end{remar}

Algorithm {\tt GreedyArrFiniteField} produces an arrangement over $\FF_q$. But since we are interested in real (or complex) arrangements, we still need to compute realizations of its intersection lattice in characteristic $0$, for instance with Algorithm {\tt ModuliSpace}.

\begin{examp}
Choose a finite field $\FF_q$ with large $q$, for example $q=14639$.
Algorithm {\tt GreedyArrFiniteField} with $P=$``simplicial'', $n=6,\ldots,37$ and $q=14639$ recovers all known rational simplicial arrangements within a few minutes.
\end{examp}

\subsection{Number fields}

\begin{remar}
\label{remQ}
Now there is another problem with {\tt GreedyArrFiniteField}. Since lines are repeatedly replaced with lines through existing intersection points, this algorithm tends to replace the original random arrangement by an arrangement whose intersection lattice has a small number of generators in the sense of Definition \ref{genM}. But then in most cases, a set of generators with four elements is attained and thus the resulting matroid is realizable over $\QQ$ by Corollary \ref{mat4Q} (if it is realizable in characteristic zero).
\end{remar}

This is why the above algorithm will mostly find matroids with rational realizations and thus miss most of the interesting examples. To address this problem we just choose a subset $F$ of the lines which should never be removed. This way the algorithm will regularly add lines generated by $F$, hence if $F$ contains some ``irrational'' entries, they will remain all the time and it is more likely that these ``irrationalities'' are enforced by the resulting matroid structure:

\algo{GreedyArrFiniteFieldAlgebraic}{$P$, $n$, $q$, $w$, $g$}
{Greedy search for arrangements of hyperplanes over a finite field with given algebraic elements}
{$n\in\NN$, a field $\FF_q$, a property $P$, an element $w\in \FF_q$ which is a root of a given polynomial $g$.}
{An arrangement with $n$ lines in $\PFq$ with $P$ if the algorithm terminates, possibly such that the field of definition of its moduli space in characteristic zero contains roots of the polynomial $g$.}
{
\item Choose a random set of lines $\Ac \subseteq \PFq$, $|\Ac|=n$ such that the first five lines are defined by the coordinate vectors $F=\{(0:0:1),(0:1:0),(1:0:0),(1:1:1),(1:0:w)\}$ and such that the other lines have ``small'' coefficients (e.g.\ in $\{\pm a\pm b w \mid a,b \in \{0,1,2\}\}$).
\item While $\Ac$ does not satisfy $P$:
\begin{itemize}
\item Choose two random points $p_1\ne p_2$ in $L_2(\Ac)$, such that the line $\ell$ through $p_1$ and $p_2$ is not in $\Ac$.
\item Let $\ell'\ne \ell$ be a random line of $\Ac$ which is not one of the first five lines and $\Ac':=(\Ac\cup \{\ell\}) \backslash \{\ell'\}$.
\item If $\Ac'$ ``is closer to satisfying $P$'' than $\Ac$, then $\Ac\leftarrow \Ac'$.
\end{itemize}
\item Print $\Ac$.
}

\begin{examp}
Let $w_0\in\RR$ be a root of $X^2-X-1\in \QQ[X]$, i.e.\ the golden ratio or its conjugate.
Choose a finite field $\FF_q$ with large $q$, for example $q=14639$ and $w:=9420\in \FF_q$.
Then $w^2-w-1=0$ and we may view $w$ as a golden ratio for $\FF_q$.
Algorithm {\tt GreedyArrFiniteFieldAlgebraic} with $P=$``simplicial'', $n=15$, $q=14639$ and $w=9420$ will produce an arrangement with the same intersection lattice as the reflection arrangement of type $H_3$. This matroid is almost impossible to obtain with the first version of the algorithm since it mostly finds rational arrangements and $\QQ(w_0)$ is the field of definition of the arrangement of type $H_3$.
\end{examp}

\begin{remar}
\label{remtrans}
Of course, the ideas for {\tt GreedyArrFiniteFieldAlgebraic} only increase the chance of finding interesting examples, the algorithm fails in many cases.
In particular, if $q$ is too small then often the resulting matroid will not be realizable in characteristic zero or the coordinate corresponding to the entry ``$w$'' will be rational in a realization over $\CC$.
If we choose $q$ large enough, then it is important to choose an original random arrangement with ``small coordinates'' since otherwise the greedy search fails to improve the property.

As an example, a good choice is $q=55441$. Then all minimal polynomials of $\cos(\pi/(2n))$ for $n=4,\ldots,12$ have roots in $\FF_q$. Since these algebraic numbers appear in the infinite series and in most of the known sporadic simplicial arrangements, this $\FF_q$ is a good choice to recover known examples with up to say $40$ lines.

One is tempted to increase the size of the set $F$ in such a way that \emph{every} realization of its matroid requires the desired irrationalities. However in practice it turns out that this is too restrictive in most cases (because it predetermines too much from the arrangement) and that the greedy search then fails to reach the property $P$.
\end{remar}

\begin{remar}\label{manyq}
Since {\tt GreedyArrFiniteFieldAlgebraic} is restricted to arrangements with the particular algebraic root $w$, it is necessary to call this procedure with many different values of $w$.
To obtain evidence that possibly all known arrangements with a certain number of lines and property $P$ were found, it is not reasonable to ``guess'' the algebraic numbers a priori.

A good solution to this problem is to take many primes, for example all primes from $53$ to $5987$, and for each prime $q$ to consider every possible value $w\in \FF_q$. Most of these choices will find arrangements whose matroid is realizable over $\QQ$, but since these primes are not so large (compared to $55441$), the considered $w$ will often satisfy algebraic equations with small coefficients.

Note that the first prime we consider is $53$ because for arrangements with at most $50$ lines, primes greater or equal to $53$ will almost only produce arrangements whose matroids are realizable in characteristic zero (which is not the case at all if one chooses for example $q=11$).
This comes from the fact that there are $q^2+q+1$ hyperplanes in $\FF_q^3$ and this number has to be much larger than $50$ (see \cite{p-CG-13} for examples of simplicial arrangements over $\FF_q$ with $\Ac = 3q$).
\end{remar}

\begin{remar}[Symmetry]\label{symm}
It may seem surprising that the algorithm, depending on the property $P$, produces so many highly symmetric arrangements, i.e.\ whose matroids have large automorphism groups, although the set of lines is completely random at the beginning.
In the case of simpliciality, this could be regarded as a strong hint that simplicial arrangements are built together using smaller symmetric pieces like the arrangements from the infinite series.

Another explanation is that arrangements with few double points only exist with symmetry: if an arrangement has a large symmetry group and contains a point which is not a double point, then its orbit consists of several points which will neither be double points.
\end{remar}

\subsection{Infinite moduli space}
\ 

Another interesting variation of {\tt GreedyArrFiniteFieldAlgebraic} is to choose a $w\in \FF_q$ which is \emph{not} a root of a polynomial with small coefficients.
If $w$ only satisfies algebraic relations with large coefficients, then a realization of the resulting matroid over $\CC$ is likely to replace $w$ by a transcendental number.
But if a realization has a transcendental coordinate, then the matroid has less linear dependencies,
i.e.\ its moduli space will possibly be infinite.

\algo{GreedyArrFiniteFieldTranscendent}{$P$, $n$, $q$, $(w_1,\ldots,w_k)$}
{Greedy search for arrangements of hyperplanes over a finite field whose matroid has infinite moduli space in characteristic zero}
{$n\in\NN$, a field $\FF_q$, a property $P$, elements $w_1,\ldots,w_k\in \FF_q$ which do not satisfy algebraic relations with small coefficients.}
{An arrangement with $n$ lines in $\PFq$ with $P$ if the algorithm terminates, possibly such that the moduli space in characteristic zero has dimension up to $k$.}
{
\item Choose a random set of lines $\Ac \subseteq \PFq$, $|\Ac|=n$ such that the first four lines are defined by the coordinate vectors $(0:0:1),(0:1:0),(1:0:0),(1:1:1)$, such that the next $m$ lines have coordinates including $w_1,\ldots,w_k$, and such that the other lines have ``small'' coefficients (e.g.\ in $\{-2,-1,0,1,2\}$).
\item While $\Ac$ does not satisfy $P$:
\begin{itemize}
\item Choose two random points $p_1\ne p_2$ in $L_2(\Ac)$, such that the line $\ell$ through $p_1$ and $p_2$ is not in $\Ac$.
\item Let $\ell'\ne \ell$ be a random line of $\Ac$ which is not one of the first $m+4$ lines and $\Ac':=(\Ac\cup \{\ell\}) \backslash \{\ell'\}$.
\item If $\Ac'$ ``is closer to satisfying $P$'' than $\Ac$, then $\Ac\leftarrow \Ac'$.
\end{itemize}
\item Print $\Ac$.
}

\begin{examp}
For example, if $q=55441$ and $w=31816\in\FF_q$, then $w$ is ``quite transcendental'' because it is not a root of any polynomial of degree less than $10$ with coefficients in $\{-2,\ldots,2\}$.
\end{examp}

\section{Results and examples}
\label{sec:resu}

\subsection{Simpliciality}
Using the method proposed in Remark \ref{manyq} we obtain a large database of simplicial arrangements.
Notice that we have to call the algorithm with all these primes from $53$ to $5987$ and all possible values $w$ many times before no further matroids are found any more. More precisely, even with a highly optimized and parallelized program in {\sc C++} running on a cluster with $1024$ cores, the algorithm still finds new examples after a week of computations.

In the following sections, we exhibit some of the most interesting arrangements found within this experiment.

\subsection{A simplicial arrangement with $35$ lines}

Let $\omega\in \RR$ be a (real) root of $X^4 - 3X^3 + 3X^2 - 3X + 1$ and

$R:=\{(1,0,0)$, $(0,1,0)$, $(0,0,1)$, $(1,1,1)$, $(1,0,\omega)$, $(1,\omega^3-2\omega^2+\omega-1,0)$, $(0,1,-\omega+1)$, $(1,1,-\omega+1)$, $(1,-\omega+1,-\omega+1)$, $(1,\omega^2-\omega+1,\omega)$, $(1,-\omega+1,0)$, $(1,-\omega+1,\omega^2-2\omega+1)$, $(1,-\omega^3+3\omega^2-3\omega+2,-\omega^3+3\omega^2-2\omega+1)$, $(1,1,-\omega^2+2\omega)$, $(1,-\omega^3+2\omega^2-2\omega+2,1)$, $(1,-\omega^3+2\omega^2-2\omega+2,-\omega^3+2\omega^2-\omega+1)$, $(1,\omega^2-\omega+1,-\omega^3+2\omega^2-\omega+1)$, $(1,\omega^2-\omega+1,-\omega^3+3\omega^2-2\omega+1)$, $(3,-\omega^3+2\omega^2-2\omega+3,-\omega^2+2\omega+1)$, $(1,-\omega^3+2\omega^2-\omega+1,-\omega^2+2\omega)$, $(0,1,\omega^3-2\omega^2)$, $(1,-\omega^3+3\omega^2-2\omega+1,-\omega^3+\omega^2+3\omega-1)$, $(1,0,-\omega^2+2\omega-1)$, $(1,-\omega^3+2\omega^2-2\omega+2,\omega)$, $(1,-\omega^3+\omega^2-\omega+1,\omega)$, $(1,-\omega^3+2\omega^2-2\omega+2,0)$, $(1,-\omega^3+2\omega^2-\omega+1,-\omega^2+3\omega-1)$, $(1,0,-\omega^2+3\omega-1)$, $(0,1,\omega^3-2\omega^2-\omega+1)$, $(1,-\omega+1,-\omega^3+3\omega^2-2\omega+1)$, $(1,-\omega^3+2\omega^2-3\omega+2,-\omega^3+3\omega^2-2\omega+1)$, $(1,\omega^2+1,-\omega^2+2\omega)$, $(1,-\omega^3+3\omega^2-3\omega+2,-2\omega^3+5\omega^2-2\omega+1)$, $(1,-\omega^3+2\omega^2-\omega+1,-\omega^3+\omega^2+3\omega-1)$, $(1,-\omega^3+\omega^2-\omega+1,-\omega^2+2\omega)\}$.

Notice that the golden ratio is one of $\omega^3-3\omega^2+2\omega-1$ or $-\omega^3+3\omega^2-2\omega+2$ depending on the choice of $\omega$. Then viewed as linear forms the coordinate vectors in $R$ define an arrangement $\Ac$ with $35$ hyperplanes which is real and simplicial. The two possible real values for $\omega$ give arrangements with isomorphic oriented matroids; Figure \ref{sim35} is a picture of $\Ac$.
The automorphism group of the intersection lattice of $\Ac$ is the dihedral group with $20$ elements.
The characteristic polynomial of $L(\Ac)$ is $(t-1)(t^2-34t+305)$, hence it is not free.

This arrangement is related to the reflection arrangement of type $H_3$ (which is contained in $\Ac$). However, despite the appearance, it is substantially different in the sense that there is no easy way to construct $\Ac$ starting from the arrangement of type $H_3$, because the required field extension is larger; it is not clear whether $\omega$ has a natural interpretation in mathematics (like the golden ratio).

\subsection{Simplicial arrangements with 1-dimensional moduli space}

Apart from the newly found real simplicial arrangement with $35$ lines, the biggest surprise from this computation is the existence of combinatorially simplicial arrangements over $\CC$ whose matroids have an infinite moduli space. Simpliciality is an extremal property (in the real case, every chamber has the least number of walls), so a simplicial arrangement should be quite rigid and should not allow small modifications preserving the matroid. We still conjecture that the moduli space of the matroid of a real simplicial arrangement is always finite.
With our greedy algorithm, we have found
$11$
pairwise non isomorphic combinatorially simplicial matroids with 1-dimensional moduli space over $\CC$. There is one with $16$ lines, four of them have $21$ lines, and six of them have $23$ lines (see Section \ref{mod1dat} in the Appendix for the explicit matroids).

Let us consider the smallest example $L$ in detail. It has $16$ lines and $38$ points. The lines $1,\ldots,16$ contain the points

$\{1$, $29$, $30$, $31$, $32$, $33$, $34$, $35$, $36$, $37$, $38\}$,
$\{2$, $5$, $12$, $15$, $26$, $28$, $35\}$,
$\{2$, $11$, $21$, $22$, $23$, $25$, $36\}$,
$\{3$, $4$, $5$, $6$, $9$, $23$, $33\}$,
$\{3$, $7$, $10$, $11$, $16$, $26$, $30\}$,
$\{2$, $6$, $13$, $16$, $17$, $24$, $38\}$,
$\{2$, $8$, $9$, $10$, $18$, $19$, $31\}$,
$\{2$, $4$, $7$, $14$, $20$, $27$, $37\}$,
$\{3$, $12$, $19$, $20$, $21$, $24$, $29\}$,
$\{3$, $8$, $13$, $14$, $15$, $22$, $34\}$,
$\{3$, $17$, $18$, $25$, $27$, $28$, $32\}$,
$\{1$, $6$, $8$, $21$, $26$, $27\}$,
$\{1$, $5$, $10$, $17$, $20$, $22\}$,
$\{1$, $7$, $12$, $13$, $18$, $23\}$,
$\{1$, $4$, $15$, $16$, $19$, $25\}$,
$\{1$, $9$, $11$, $14$, $24$, $28\}$

respectively; this completely determines the matroid $L$.
This matroid is generated by $5$ lines, $g(L)=5$ and Algorithm {\tt ModuliSpace} yields a $1$-dimensional variety $V\setminus E$ defined by the ideal generated by $X_1^4 - 3 X_1^3 X_2 + 4 X_1^2 X_2^2 - 2 X_1 X_2^3 + X_2^4$ and some equations for $E$.
It is easy to see that every solution $(X_1,X_2)\in(\CC^\times)^2$ is such that the quotient $X_1/X_2$ lies in $\CC\setminus \RR$. On the other hand, the cases $X_1=0$ or $X_2=0$ are excluded by the equations for $E$, thus there is no point in the moduli space with real coordinates and there are no real realizations of $L$.

We can proceed similarly for the other example matroids with 1-dimensional moduli space. None of them admits a real realization.

\subsection{Simplicial arrangements over the complex numbers}

The large part of the matroids found in the experiment have no apparent interesting properties. Since we have found
$1318$
simplicial arrangements over $\CC$ with up to $50$ lines, we content ourselves with some statistics instead of a complete enumeration.

\subsubsection{Number of lines}
\begin{figure}
\includegraphics[width=0.8\textwidth]{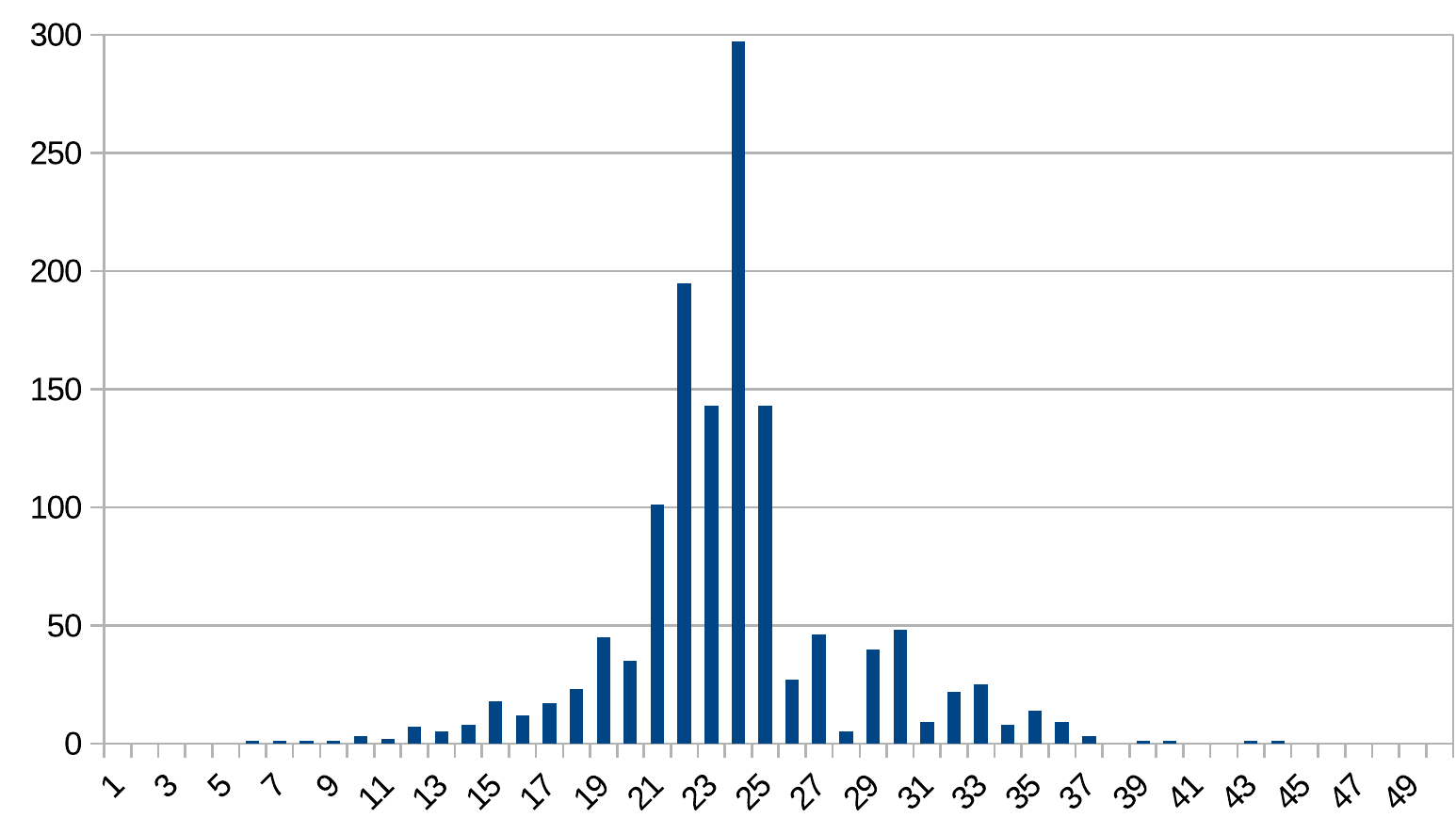}
\caption{Numbers of found combinatorially simplicial matroids up to isomorphisms which are realizable over $\CC$.\label{chart}}
\end{figure}

Figure \ref{chart} displays the numbers of combinatorially simplicial arrangements over $\CC$ that we found with the greedy algorithm.
\begin{remar}
\begin{enumerate}
\item The picture suggests that there will be a maximum for $24$ lines and that the set of simplicial arrangements could be finite when excluding the infinite series. However, we have to be careful with such a conjecture: for most of the choices of primes and of values $w$, the algorithm does not terminate in a short time (perhaps because there is no such arrangement). This is why we have to add a bound for the number of passes through the loop. Since this bound is the same for all numbers of lines, it is conceivable that this bound was too small for the higher numbers of lines and that we therefore missed many examples with more than $25$ lines.

On the other hand, the classification of crystallographic arrangements sets a precedent for a finite class of simplicial arrangements. I still conjecture that the set of non-supersolvable arrangements $\Ac$ with fixed $\sigma(\Ac)$ is finite in characteristic zero. In this case, Figure \ref{chart} could be quite close to the actual picture.
\item We did not find all the arrangements from the infinite series, probably because the required algebraic numbers are difficult to obtain in ``small'' finite fields. Thus it is very likely that we missed several other examples which require further algebraic relations.
\item It is unclear why the number of found arrangements with $23$, $26$, and $28$ lines is so small. A possible explanation is that these numbers do not allow matroids with many symmetries.
\end{enumerate}
\end{remar}

\subsubsection{Number fields}
We list here the number fields that appear as minimal fields of definition for the simplicial matroids in the database (except for those with infinite moduli space):
\begin{enumerate}
\item Cyclotomic fields: $\QQ(\zeta)$ for $\zeta$ a primitive $e$-th root of unity, $e\in\{3,4,5,7,8\}$.
\item\label{quadr} Quadratic fields: $\QQ(\sqrt{a})$ for $a\in \{2,3,5,-2,-7\}$.
\item\label{rezeta} The fields $\QQ(\zeta+\zeta^{-1})$ for $\zeta$ an $e$-th root of unity, $e\in\{7,9,11\}$.
\item The fields
\begin{eqnarray*}
K_1 &:=& \QQ[X]/(X^3-X+1), \\
K_2 &:=& \QQ[X]/(X^4 - 3X^3 + 3X^2 - 3X + 1)
\end{eqnarray*}
which have embeddings into $\RR$.
\item\label{4fields} The fields $\QQ[X]/(f)$ for $f$ in
\[ \{ 4X^4 + 8X^2 + 1, \quad
X^4 - 4X^3 + 8X^2 - 5X + 1, \]
\[ X^4 - 5X^3 + 11X^2 - 10X + 4, \quad
X^4 - X^3 + 2X + 1 \}. \]
\end{enumerate}

\begin{remar}
\begin{enumerate}
\item The fields $\QQ(\zeta_e+\zeta_e^{-1})$ for $\zeta_e$ a primitive $e$-th root of unity are also the minimal fields of definition of the arrangements of the infinite series (see \cite[Thm.\ 3.6]{p-C10b}). Moreover,
$\QQ(\sqrt{2})=\QQ(\zeta_8+\zeta_8^{-1})$, $\QQ(\sqrt{3})=\QQ(\zeta_{12}+\zeta_{12}^{-1})$, $\QQ(\sqrt{5})=\QQ(\zeta_{10}+\zeta_{10}^{-1})$, so one could move these fields from (\ref{quadr}) to (\ref{rezeta}).
\item The field $K_1$ is the minimal field of definition for $A(15,5)$ and $A(21,7)$ in the numbering by Gr\"unbaum \cite{p-G-09}. Note that this field was defined as $\QQ[X]/(X^3-3X+25)$ in \cite{p-C10b} (which is isomorphic to $K_1$), but the polynomial given here looks somewhat more natural.
\item The field $K_2$ is the minimal field of definition for the newly found arrangement with $35$ lines.
\item I have no good explanation for the four fields in (\ref{4fields}).
\item None of the extensions $K_1/\QQ$ nor $K_2/\QQ$ is Galois.
\end{enumerate}
\end{remar}

\subsubsection{Automorphism groups}
We find $47$ different groups of automorphisms of matroids up to isomorphisms. Their orders are
\[ 1, 2, 3, 4, 6, 8, 12, 16, 18, 20, 24, 32, 36, 40, 42, 48, 54, 64, \]
\[ 84, 96, 108, 110, 120, 128, 192, 200, 432, 600, 1536, 1764. \]
Since some of the (complex) reflection arrangements are combinatorially simplicial\footnote{More precisely, except the reflection arrangement of the group $G_{31}$, the reflection arrangement of a finite irreducible complex reflection group is combinatorially simplicial if and only if it is inductively free.} (see \cite{p-CG-13}), as for example the series of imprimitive reflection groups $G(e,1,r)$, these large groups appear in the list. Some of the other complex examples are probably related to reflection groups and inherit the symmetries.

\subsection{Free arrangements}
We would like to use our greedy algorithm with the property ``counterexample to Terao's conjecture'' (see \cite{OT} for details).
Since such a counterexample should be a matroid which has at least one free realization, we need a matroid whose characteristic polynomial has only integral roots. The degrees of a free arrangement are the roots $1,e,f$ of its characteristic polynomial. For an arbitrary arrangement, if $1,e,f$ are the roots of the characteristic polynomial, then the number $m(L):=(f-e)^2\in\ZZ$ (possibly negative if $e,f\in\CC\setminus\RR$) is easy to compute from the intersection lattice. We can call our greedy algorithm with the goal to optimize $m(L)\ge 0$, $m(L)$ a square. This way we obtain many matroids which may be filtered using several conditions (see for example \cite{p-Y-12} for a good overview). As a result we recover all the presently known examples of free but not inductively and free but not recursively free arrangements over $\CC$ including those with infinite moduli space (see \cite{p-CH-13}, \cite{p-ACKN-14}). We skip the details here because the experiment gave no new insight in this direction.

\subsection{$(n_k)$-configurations}
An $(n,k)$-configuration of lines and points consists of a set of $n$ lines and a set of $n$ points such that $k$ of the points are on each line and $k$ of the lines go through each point (see \cite{BP15} for previous results).
We have tried to find some interesting $(n_k)$-configurations with the greedy algorithm. It is in particular still open, whether a $(23_4)$-configuration exists. Although our algorithm comes very close to such a configuration (we find configurations with $23$ lines and points such that only one or two points lie on less than $4$ lines), the method presented in \cite{C17b} seems to be more adequate for this problem: it finds several complex arrangements which we do not obtain with the greedy algorithm.

\subsection{Few double points}
Of course one can run the greedy algorithm with the goal to minimize the number of double points. A short experiment in this direction produced exactly the known examples, amongst others the reflection arrangements of imprimitive reflection groups (including the Hesse configuration) and some of the simplicial arrangements from the infinite series.

\section{Appendix}

\subsection{Simplicial arrangements with 1-dimensional moduli space}\label{mod1dat}

We collect here the matroids of rank three that are simplicial and have 1-dimensional moduli space and that were found in the computation.
To reduce the amount of data we have chosen the following format:

Let $M$ be a matroid of rank three with $n$ lines.
Let $T=[t_1,t_2,\ldots]$ be the lexicographically ordered sequence of all triples $(a,b,c)$ with $1\le a<b<c\le n$, e.g.\ $n=4$ and $T=[(1,2,3),(1,2,4),(1,3,4),(2,3,4)]$.
We represent $M$ by the sequence of indices $i$ such that the lines labeled by $a,b,c$ for $(a,b,c)=t_i$ are linearly dependent.
\bigskip

{\tiny
\noindent
(1): $n=16$, $[$ $96$, $97$, $98$, $99$, $100$, $101$, $102$, $103$, $104$, $105$, $108$, $109$, $110$, $127$, $137$, $142$, $143$, $152$, $173$, $180$, $186$, $206$, $219$, $220$, $221$, $230$, $249$, $256$, $263$, $278$, $279$, $280$, $291$, $304$, $311$, $313$, $314$, $320$, $349$, $356$, $365$, $368$, $369$, $375$, $396$, $419$, $423$, $427$, $454$, $457$, $464$, $480$, $489$, $490$, $505$, $551$, $552$, $553$, $554$, $555$, $556$, $557$, $558$, $559$, $560$ $]$, \\ 
(2): $n=21$, $[$ $188$, $189$, $190$, $194$, $199$, $213$, $223$, $231$, $235$, $251$, $253$, $261$, $279$, $293$, $310$, $332$, $340$, $348$, $394$, $403$, $414$, $428$, $431$, $441$, $444$, $449$, $458$, $468$, $481$, $489$, $509$, $525$, $541$, $546$, $559$, $572$, $582$, $587$, $592$, $596$, $619$, $623$, $628$, $634$, $652$, $658$, $671$, $685$, $695$, $711$, $713$, $719$, $762$, $766$, $782$, $790$, $808$, $811$, $820$, $822$, $839$, $862$, $888$, $889$, $914$, $948$, $952$, $955$, $959$, $975$, $983$, $997$, $1007$, $1011$, $1060$, $1076$, $1082$, $1085$, $1095$, $1119$, $1129$, $1134$, $1139$, $1157$, $1201$, $1204$, $1207$, $1214$, $1222$, $1226$, $1229$, $1234$, $1251$, $1259$, $1279$, $1298$, $1313$, $1323$, $1330$ $]$, \\ 
(3): $n=21$, $[$ $188$, $189$, $190$, $201$, $209$, $225$, $241$, $246$, $259$, $269$, $271$, $279$, $291$, $305$, $319$, $330$, $349$, $368$, $374$, $386$, $393$, $409$, $410$, $445$, $453$, $457$, $465$, $473$, $485$, $497$, $530$, $536$, $543$, $552$, $556$, $561$, $579$, $601$, $612$, $626$, $634$, $658$, $661$, $670$, $687$, $693$, $719$, $741$, $745$, $765$, $772$, $778$, $803$, $812$, $837$, $850$, $862$, $864$, $871$, $894$, $910$, $913$, $916$, $933$, $942$, $958$, $974$, $990$, $998$, $1007$, $1009$, $1014$, $1021$, $1026$, $1029$, $1041$, $1063$, $1066$, $1074$, $1082$, $1088$, $1117$, $1122$, $1148$, $1151$, $1155$, $1159$, $1173$, $1177$, $1189$, $1218$, $1237$, $1251$, $1255$, $1275$, $1307$, $1314$, $1316$, $1330$ $]$, \\ 
(4): $n=21$, $[$ $188$, $189$, $190$, $195$, $196$, $211$, $239$, $248$, $250$, $271$, $301$, $306$, $313$, $318$, $327$, $351$, $354$, $374$, $387$, $388$, $420$, $421$, $424$, $451$, $469$, $473$, $477$, $487$, $503$, $509$, $518$, $520$, $538$, $544$, $567$, $574$, $586$, $592$, $611$, $620$, $628$, $635$, $651$, $683$, $694$, $714$, $739$, $742$, $743$, $756$, $767$, $793$, $794$, $798$, $825$, $828$, $839$, $841$, $853$, $863$, $866$, $888$, $899$, $901$, $904$, $914$, $922$, $926$, $934$, $946$, $961$, $983$, $991$, $998$, $1015$, $1021$, $1035$, $1068$, $1072$, $1085$, $1089$, $1093$, $1097$, $1104$, $1113$, $1126$, $1155$, $1157$, $1181$, $1191$, $1198$, $1214$, $1229$, $1235$, $1248$, $1271$, $1289$, $1306$, $1330$ $]$, \\ 
(5): $n=21$, $[$ $188$, $189$, $190$, $221$, $223$, $234$, $251$, $253$, $261$, $269$, $283$, $284$, $308$, $310$, $324$, $327$, $348$, $353$, $371$, $377$, $386$, $389$, $396$, $405$, $412$, $413$, $432$, $439$, $449$, $481$, $492$, $497$, $511$, $518$, $527$, $536$, $546$, $581$, $589$, $595$, $600$, $628$, $635$, $642$, $664$, $677$, $683$, $686$, $701$, $711$, $725$, $728$, $737$, $772$, $784$, $793$, $809$, $812$, $828$, $848$, $862$, $900$, $901$, $929$, $935$, $940$, $947$, $970$, $976$, $984$, $988$, $992$, $1002$, $1014$, $1033$, $1064$, $1080$, $1085$, $1090$, $1141$, $1142$, $1150$, $1154$, $1160$, $1170$, $1171$, $1176$, $1201$, $1211$, $1218$, $1225$, $1234$, $1237$, $1253$, $1267$, $1290$, $1306$, $1314$, $1330$ $]$, \\ 
(6): $n=23$, $[$ $185$, $186$, $231$, $366$, $370$, $375$, $382$, $383$, $384$, $400$, $405$, $406$, $407$, $412$, $414$, $419$, $425$, $431$, $432$, $433$, $436$, $456$, $460$, $462$, $473$, $485$, $506$, $513$, $520$, $533$, $537$, $547$, $557$, $567$, $607$, $625$, $629$, $639$, $647$, $653$, $658$, $667$, $702$, $710$, $716$, $733$, $741$, $755$, $765$, $776$, $796$, $815$, $818$, $830$, $843$, $850$, $851$, $878$, $908$, $911$, $934$, $937$, $948$, $957$, $959$, $977$, $983$, $988$, $1005$, $1029$, $1052$, $1071$, $1084$, $1090$, $1108$, $1123$, $1148$, $1156$, $1169$, $1173$, $1183$, $1186$, $1194$, $1207$, $1214$, $1215$, $1232$, $1249$, $1251$, $1277$, $1286$, $1295$, $1302$, $1342$, $1370$, $1375$, $1383$, $1392$, $1395$, $1412$, $1417$, $1425$, $1429$, $1445$, $1462$, $1474$, $1482$, $1500$, $1502$, $1510$, $1515$, $1528$, $1532$, $1541$, $1578$, $1595$, $1597$, $1598$, $1601$, $1608$, $1618$, $1620$, $1632$, $1638$, $1651$, $1670$, $1677$, $1688$, $1693$, $1699$, $1702$, $1720$, $1762$ $]$, \\ 
(7): $n=23$, $[$ $185$, $186$, $231$, $368$, $369$, $371$, $377$, $384$, $385$, $403$, $404$, $408$, $410$, $413$, $414$, $416$, $422$, $428$, $431$, $438$, $439$, $450$, $470$, $471$, $482$, $489$, $496$, $509$, $524$, $537$, $539$, $547$, $556$, $571$, $596$, $625$, $627$, $636$, $644$, $651$, $668$, $682$, $711$, $719$, $728$, $734$, $740$, $760$, $763$, $772$, $784$, $795$, $803$, $840$, $841$, $853$, $860$, $872$, $886$, $896$, $901$, $920$, $939$, $954$, $988$, $990$, $1007$, $1015$, $1033$, $1048$, $1055$, $1062$, $1070$, $1077$, $1079$, $1083$, $1104$, $1120$, $1126$, $1132$, $1134$, $1162$, $1163$, $1170$, $1177$, $1189$, $1202$, $1212$, $1245$, $1249$, $1255$, $1261$, $1262$, $1284$, $1292$, $1301$, $1305$, $1331$, $1348$, $1363$, $1384$, $1387$, $1403$, $1419$, $1420$, $1422$, $1432$, $1445$, $1463$, $1480$, $1521$, $1524$, $1526$, $1532$, $1540$, $1553$, $1568$, $1569$, $1591$, $1604$, $1625$, $1634$, $1642$, $1651$, $1666$, $1669$, $1685$, $1702$, $1705$, $1712$, $1717$, $1753$, $1758$ $]$, \\ 
(8): $n=23$, $[$ $175$, $176$, $231$, $366$, $367$, $370$, $389$, $392$, $396$, $397$, $400$, $408$, $416$, $420$, $423$, $424$, $425$, $435$, $436$, $437$, $439$, $445$, $451$, $467$, $475$, $487$, $491$, $497$, $517$, $546$, $547$, $564$, $573$, $581$, $599$, $604$, $630$, $636$, $650$, $666$, $682$, $688$, $707$, $709$, $726$, $739$, $741$, $768$, $778$, $791$, $794$, $805$, $818$, $820$, $863$, $883$, $889$, $897$, $901$, $908$, $917$, $920$, $940$, $941$, $971$, $975$, $999$, $1008$, $1011$, $1018$, $1043$, $1049$, $1059$, $1079$, $1122$, $1123$, $1141$, $1142$, $1146$, $1168$, $1174$, $1181$, $1189$, $1191$, $1202$, $1212$, $1213$, $1226$, $1263$, $1270$, $1287$, $1293$, $1297$, $1303$, $1306$, $1313$, $1317$, $1345$, $1347$, $1356$, $1368$, $1379$, $1381$, $1399$, $1420$, $1449$, $1454$, $1458$, $1473$, $1478$, $1494$, $1507$, $1510$, $1518$, $1536$, $1542$, $1566$, $1572$, $1580$, $1606$, $1613$, $1622$, $1626$, $1630$, $1645$, $1654$, $1665$, $1730$, $1746$, $1747$, $1749$, $1758$, $1768$ $]$, \\ 
(9): $n=23$, $[$ $200$, $202$, $227$, $340$, $344$, $350$, $354$, $356$, $361$, $364$, $370$, $371$, $381$, $382$, $390$, $396$, $398$, $403$, $418$, $426$, $427$, $458$, $460$, $466$, $477$, $485$, $490$, $508$, $517$, $518$, $529$, $547$, $564$, $581$, $587$, $606$, $630$, $641$, $653$, $655$, $678$, $681$, $683$, $694$, $708$, $713$, $730$, $750$, $761$, $795$, $801$, $813$, $816$, $822$, $825$, $845$, $848$, $860$, $867$, $888$, $893$, $919$, $937$, $943$, $963$, $964$, $985$, $991$, $1002$, $1023$, $1041$, $1056$, $1073$, $1085$, $1095$, $1106$, $1118$, $1123$, $1136$, $1156$, $1176$, $1191$, $1204$, $1222$, $1234$, $1240$, $1261$, $1272$, $1289$, $1304$, $1317$, $1321$, $1333$, $1356$, $1362$, $1381$, $1385$, $1392$, $1396$, $1411$, $1442$, $1447$, $1462$, $1472$, $1485$, $1491$, $1492$, $1527$, $1530$, $1537$, $1548$, $1553$, $1564$, $1569$, $1590$, $1592$, $1601$, $1611$, $1612$, $1636$, $1642$, $1649$, $1664$, $1671$, $1678$, $1696$, $1698$, $1705$, $1707$, $1711$, $1725$, $1733$, $1743$ $]$, \\ 
(10): $n=23$, $[$ $138$, $139$, $229$, $351$, $352$, $356$, $364$, $368$, $379$, $392$, $393$, $396$, $397$, $399$, $400$, $407$, $408$, $421$, $432$, $435$, $438$, $457$, $459$, $462$, $481$, $486$, $518$, $519$, $531$, $553$, $555$, $563$, $570$, $584$, $596$, $617$, $627$, $638$, $647$, $651$, $670$, $676$, $701$, $703$, $720$, $730$, $745$, $749$, $759$, $792$, $798$, $818$, $836$, $837$, $851$, $880$, $885$, $898$, $904$, $915$, $927$, $929$, $931$, $942$, $966$, $969$, $991$, $1007$, $1010$, $1025$, $1028$, $1032$, $1039$, $1059$, $1073$, $1079$, $1102$, $1105$, $1113$, $1120$, $1125$, $1139$, $1146$, $1172$, $1190$, $1199$, $1204$, $1212$, $1245$, $1258$, $1262$, $1281$, $1299$, $1301$, $1309$, $1315$, $1338$, $1347$, $1354$, $1386$, $1388$, $1405$, $1408$, $1412$, $1423$, $1435$, $1439$, $1447$, $1467$, $1474$, $1489$, $1504$, $1511$, $1522$, $1569$, $1570$, $1574$, $1606$, $1640$, $1642$, $1645$, $1648$, $1653$, $1654$, $1667$, $1687$, $1695$, $1710$, $1720$, $1723$, $1728$, $1750$, $1764$ $]$, \\ 
(11): $n=23$, $[$ $18$, $19$, $20$, $21$, $226$, $227$, $228$, $229$, $230$, $231$, $398$, $399$, $408$, $409$, $414$, $427$, $436$, $437$, $438$, $439$, $440$, $441$, $442$, $449$, $467$, $490$, $492$, $502$, $507$, $519$, $528$, $543$, $552$, $585$, $591$, $603$, $614$, $618$, $638$, $658$, $659$, $679$, $687$, $702$, $711$, $720$, $730$, $735$, $766$, $767$, $786$, $790$, $813$, $822$, $845$, $850$, $854$, $862$, $880$, $881$, $908$, $911$, $933$, $945$, $946$, $971$, $972$, $973$, $987$, $1021$, $1023$, $1026$, $1054$, $1056$, $1078$, $1083$, $1098$, $1105$, $1114$, $1115$, $1135$, $1149$, $1154$, $1171$, $1182$, $1188$, $1191$, $1212$, $1247$, $1255$, $1257$, $1278$, $1287$, $1297$, $1306$, $1344$, $1360$, $1371$, $1383$, $1387$, $1400$, $1412$, $1414$, $1421$, $1427$, $1439$, $1446$, $1455$, $1459$, $1470$, $1496$, $1507$, $1525$, $1535$, $1543$, $1567$, $1583$, $1587$, $1595$, $1610$, $1617$, $1620$, $1626$, $1630$, $1633$, $1645$, $1660$, $1697$, $1701$, $1730$, $1768$, $1769$, $1770$, $1771$ $]$,
}

\bibliographystyle{amsalpha}

\newcommand{\etalchar}[1]{$^{#1}$}
\def\cprime{$'$}
\providecommand{\bysame}{\leavevmode\hbox to3em{\hrulefill}\thinspace}
\providecommand{\MR}{\relax\ifhmode\unskip\space\fi MR }
\providecommand{\MRhref}[2]{%
  \href{http://www.ams.org/mathscinet-getitem?mr=#1}{#2}
}
\providecommand{\href}[2]{#2}

\end{document}